\newtheorem{prop}{Proposition}[section]
\newtheorem{theorem}[prop]{Theorem}
\newtheorem{lemma}[prop]{Lemma}
\newtheorem{corollary}[prop]{Corollary}
\theoremstyle{definition}
\newtheorem{definition}[prop]{Definition}
\theoremstyle{remark}
\newtheorem{remark}[prop]{Remark}
\newtheorem{example}[prop]{Example}
\newtheorem{question}[prop]{Question}
\newcommand{\Sym}{\mathrm{Sym}}
\newcommand{\pp}{\mathbb{P}}
\newcommand{\PP}{\mathbb{P}}
\newcommand{\oo}{\mathcal{O}}
\newcommand{\rk}{\mathrm{rk}}
\newcommand{\NN}{\mathbb{N}}
\newcommand{\ZZ}{\mathbb{Z}}
\def\rmapdown#1{\Big\downarrow\rlap{$\vcenter{\hbox{$\scriptstyle
#1$}}$}}
\def\lmapdown#1{\Big\downarrow\llap{$\vcenter{\hbox{$\scriptstyle
#1\;\;\,$}}$}}
\title{Holomorphic symmetric differentials and parallelizable compact complex manifolds}
\author{Ernesto C. Mistretta}
\date{}
\begin{document}

\maketitle

%

\begin{abstract}

We provide a characterization of complex tori using holomorphic symmetric differentials.
With the same method we show that compact complex manifolds of Kodaira dimension 0 having some symmetric power of the cotangent bundle globally generated are quotients of parallelizable manifolds, therefore have an infinite fundamental group.

\end{abstract}

\section{Introduction}

The relations between  differentials and topology of an algebraic variety were known
since the time of K\"ahler and Severi (cf. \cite{kahler},  \cite{severi2}, \cite{severi})

In recent years a lot of progress has been made towards understanding 
the relationship between fundamental group $\pi_1(X)$ of a 
compact K\"ahler manifold $X$
 and 
holomporphic symmetric differentials $H^0(X, Sym^k \Omega^1_X)$,
with $\Omega^1_X$  the holomorphic cotangent bundle of $X$.
In particular it is asked by H\'el\`ene Hesnault whether 
a compact K\"ahler manifold $X$ with  infinite 
fundamental group always carries a non vanishing 
$H^0(X, Sym^k \Omega^1_X)$ for some $k>0$,
and this has an affirmative 
answer, at least in the case where 
the fundamental group has a finite dimensional representaion with infinite image (cf. \cite{brunebarbe}).

On the other hand, 
one could wonder whether the converse is true,
\emph{i.e.} whether a variety with some (or many)
holomporphic symmetric differentials always have an infinite 
fundamental group.
Because of Hodge decomposition
it is immediate to observe that if a compact K\"aler manifold has a non vanishing holomorphic 1-form,
then it has an infinite 
fundamental group.
Also, the presence of particular rank-1 holomorphic symmetric differentials on a projective variety
implies that the fundamental group 
of a projective variety is infinite (cf. \cite{bogomolov}),
and on a complact complex manifold
the presence of a nowhere degenerate holomorphic section 
of $S^2 \Omega_X^1$ as well implies that the 
fundamental group is infinite (cf. \cite{biswasdumitrescu}).

However,
this is not the case in general for higher order symmetric differentials:
there are varieties $X \subset \PP^N$, which are general complete intersections
of high degree in $\PP^N$ and dimension $n \leqslant N/2$,
that have ample cotangent bundle $\Omega^1_X$ 
(cf. \cite{brotbek}) and are simply connected. 
These varieties in particular have 
some symmetric powers of the cotangent bundle with as many holomorphic sections
as possible.
These varieties have 
\emph{semiample} (both weakly and strongly, according to the definitions below)
cotangent bundle and  maximal Kodaira dimension $k(X) = n$,
\emph{i.e.} they are of general type. 
We will show  that this cannot hold in case of a smooth projective variety
of smaller Kodaira dimension.

If $X$ is a  projective variety
of Kodaira dimension $k(X)=0$,
in earlier works inspired by the definition of base loci (cf. \cite{5froci})
and Iitaka fibrations for vector bundles,
we showed that 
having a globally generated symmetric differential bundle 
$Sym^k \Omega^1_X$ for some $k$ 
is equivalent to being isomorphic to an abelian variety, 
and having a generically  generated symmetric 
differential bundle $Sym^k \Omega^1_X$ for some $k$ is equivalent to being birational to an abelian variety (cf. \cite{mistrurbi} and \cite{mistrettaav}). 
In particular in those two cases the fundamental group $\pi_1 (X)$ is infinite.

The purpose of this work is to show the following generalisation 
of these results to the cases of a compact complex manifold,
a compact K\"ahler manifold, and a smooth projective variety:

\begin{theorem}
\label{main}

Let $X$ be a compact complex manifold of dimension $n$ and  Kodaira dimension
$k(X)$.

\begin{enumerate}

\item If $k(X) = 0$ and $\Omega^1_X$ is strongly semiample,
then the fundamental group of $X$ is infinite. 

\item If  $k(X) = 0$ and $X$ is K\"ahler,
then $\Omega^1_X$ is strongly semiample if and only if 
$X$ is biholomorphic to a complex torus.


\item If  $k(X) = 0$ and $X$ is projective,
then $\Omega^1_X$ is weakly semiample if and only if 
$X$ is an \'etale quotient an abelian variety by the action of a finite group.
	
\item If $k(X) <n$, $X$ is projective, and $\Omega^1_X$ is weakly semiample,
then then the fundamental group of $X$ is infinite.

\end{enumerate}

\end{theorem}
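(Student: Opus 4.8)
The case $k(X)=0$ is already contained in part (iii): there $X$ is an \'etale quotient $A/G$ of an abelian variety by a finite group, so $A\to X$ is a finite \'etale cover, $\pi_1(X)$ contains the lattice $\pi_1(A)\cong\ZZ^{2\dim A}$ as a finite-index subgroup, and is therefore infinite. Hence the plan is to reduce the remaining range $1\le k(X)\le n-1$ to part (iii) through the Iitaka fibration. I would pass to the Iitaka fibration $\psi\colon X\dashrightarrow Z$ of $K_X$, a dominant rational map with $\dim Z=k(X)$, and work over the Zariski-open locus $Z^\circ$ on which $\psi$ is a smooth projective morphism. By Iitaka's theory a general fibre $F=\psi^{-1}(z)$ is smooth projective with $k(F)=0$ and $\dim F=n-k(X)\ge 1$. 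The surjection $\Omega^1_X|_F\twoheadrightarrow\Omega^1_F$ yields a closed immersion $\PP(\Omega^1_F)\hookrightarrow\PP(\Omega^1_X)|_F\subseteq\PP(\Omega^1_X)$ under which $\oo_{\PP(\Omega^1_X)}(1)$ restricts to $\oo_{\PP(\Omega^1_F)}(1)$; since the restriction of a semiample line bundle to a subvariety is semiample, $\Omega^1_F$ is weakly semiample. Part (iii) now applies to $F$: it is an \'etale quotient of an abelian variety, and in particular $\pi_1(F)$ is infinite.

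The heart of the matter is to propagate the infiniteness of $\pi_1(F)$ to $\pi_1(X)$. The inclusion of a general fibre gives, over $Z^\circ$, the essentially exact sequence $\pi_1(F)\to\pi_1(X)\to\pi_1(Z)\to 1$ (with $\pi_1(X)\twoheadrightarrow\pi_1(Z)$ valid for a proper surjection with connected fibres), so if $\pi_1(Z)$ is infinite we are done. Otherwise $\pi_1(Z)$ is finite and, replacing $X$ by the finite \'etale cover induced from the universal cover of $Z$ — an operation preserving projectivity, the value of $k$, and the weak semiampleness of $\Omega^1_X$, since global generation of $\oo_{\PP(\Omega^1_X)}(m)$ pulls back under any morphism — we may assume $\pi_1(Z)=1$. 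Then $\pi_1(F)\twoheadrightarrow\pi_1(X)$, so it remains to show that the image of $\pi_1(F)$, which up to finite index is the coinvariant group of the lattice $H_1(F,\ZZ)$ under the monodromy representation $\rho\colon\pi_1(Z^\circ)\to\mathrm{Sp}(H_1(F,\ZZ))$ of the family, is infinite.

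Because the fibres are abelian quotients, $\rho$ preserves a polarization, so the invariants and coinvariants of $\rho\otimes\QQ$ on $H_1(F,\QQ)$ have equal dimension; thus the coinvariants are finite precisely when $\rho\otimes\QQ$ has no nonzero invariants, i.e.\ when the monodromy is, rationally, fixed-point-free. This is the genuinely delicate case, and the step I expect to be the main obstacle. Here I would invoke the \emph{global} weak semiampleness of $\Omega^1_X$: the sections of $\Sym^m\Omega^1_X$ generating $\oo_{\PP(\Omega^1_X)}(m)$ restrict fibrewise to monodromy-invariant elements of $H^0(F,\Sym^m\Omega^1_F)$, and requiring these to base-point-freely generate $\oo_{\PP(\Omega^1_F)}(m)$ on every fibre should force the image of $\rho$ to be finite. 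Once the monodromy is finite, a further finite \'etale base change trivializes it, the full lattice $H_1(F,\ZZ)$ then survives in $H_1(X,\ZZ)$, and $\pi_1(X)$ is infinite.

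The crux is therefore the passage from global generation of $\oo_{\PP(\Omega^1_X)}(m)$ to finiteness of the monodromy: concretely this requires controlling $\psi_*\Sym^m\Omega^1_X$ and the associated sub-variation of Hodge structure, and it is exactly here that the hypothesis on $\Omega^1_X$, rather than merely on each $\Omega^1_F$, is essential (fibrewise the base-point-freeness condition can be vacuous, for instance for elliptic fibres, so the obstruction must come from the interaction with the base $Z$). As an alternative to the monodromy analysis, one may instead try to produce directly a rank-one symmetric differential on $X$ of the kind considered by Bogomolov and conclude that $\pi_1(X)$ is infinite from \cite{bogomolov}.
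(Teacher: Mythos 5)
You prove at most part (\textit{iv}) of the theorem: parts (\textit{i})--(\textit{iii}) are taken as black boxes, whereas in the paper they carry most of the weight (part (\textit{i}) and (\textit{ii}) rest on Theorem \ref{trivial} --- trivializing $S^m\Omega^1_X$ from $k(X,\det\Omega^1_X)=0$, the Veronese argument trivializing $\PP(\Omega^1_X)$, Lemma \ref{split}, and a recursion producing a finite Galois cover that is parallelizable --- and part (\textit{iii}) is Fujiwara's characterization). Within part (\textit{iv}) itself there is a genuine, and in my view fatal, gap at exactly the point you flag: the claim that global generation of $\oo_{\PP(\Omega^1_X)}(m)$ ``should force'' the monodromy on $H_1(F,\ZZ)$ to be finite is asserted, not proved, and no mechanism is offered. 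That this single step carries \emph{all} the content of the statement is shown by Dolgachev surfaces: these are simply connected projective surfaces of Kodaira dimension $1$ whose Iitaka fibration is an elliptic fibration over $\PP^1$; the general fibre $F$ is an elliptic curve, so $k(F)=0$, $\Omega^1_F$ is trivial (in particular weakly semiample), $\pi_1(F)\cong\ZZ^2$ is infinite, and the base is simply connected --- every intermediate conclusion of your reduction holds, yet $\pi_1(X)=1$. So the propagation of infinite $\pi_1(F)$ to infinite $\pi_1(X)$ cannot follow from the fibration geometry alone, and your argument never actually uses weak semiampleness of $\Omega^1_X$ (as opposed to $\Omega^1_F$) in a way that excludes such examples. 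Two further points would also need repair even if the crux were settled: the sequence $\pi_1(F)\to\pi_1(X)\to\pi_1(Z)\to1$ is exact only after replacing $X,Z$ by the loci of smooth fibres, and loops around singular or multiple fibres impose relations when passing back to $\pi_1(X)$ that can kill the image of $\pi_1(F)$ even when the monodromy over $Z^{\circ}$ is finite; likewise ``the full lattice $H_1(F,\ZZ)$ survives in $H_1(X,\ZZ)$'' after trivializing the monodromy fails for the same reason (vanishing-cycle relations). The closing fallback via Bogomolov's rank-one differentials is again only a gesture.

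The paper takes an entirely different route for part (\textit{iv}), which avoids any fibration or monodromy analysis: Fujiwara's Theorem \ref{fuji} (reproved in the paper by pushing forward the class $m^r\xi^r$ of the intersection of $r$ generic sections of $\oo_{\PP(\Omega^1_X)}(m)$, via the Grothendieck relation) shows that weak semiampleness of $\Omega^1_X$ makes $\omega_X=\det\Omega^1_X$ a semiample line bundle; this is precisely what is needed to apply H\"oring's structure theorem (Theorem \ref{hoering}), which produces a finite cover $X^{\prime}\cong Y\times A$ with $Y$ of ample canonical bundle and $A$ an abelian variety, necessarily of dimension $n-k(X)>0$ when $k(X)<n$; hence $\pi_1(X)$ contains $\pi_1(A)\times\pi_1(Y)$ and is infinite. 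In effect, your Iitaka-fibration strategy, if the monodromy step could be established, would amount to reproving a weak form of H\"oring's theorem from scratch; as written, the decisive step is missing and the surrounding scaffolding is not strong enough to supply it.
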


\subsection{Acknowledgements}

I am very thankful to Simone Diverio and Andreas H\"oring, for their
ideas and conversations,
and to Sorin Dumitrscu, introducing me to complex parallelizable manifolds.

\section{Notations and basic lemmas}

Let $X$ be a compact complex manifold,
let $E$ be a holomorphic vector bundle on $X$.
Let $\pi \colon \PP(E) \to X$ be the projective bundle of 1-dimensional quotients
of $E$. It comes with a \emph{tautological} quotient
$\pi^* E \twoheadrightarrow \oo_{\PP(E) (1)}$,
where $\oo_{\PP(E)} (1)$ is a 
line bundle on $\PP(E)$.

\begin{definition}
Let $E$ be a holomorphic vector bundle on a compact complex manifold $X$.
\begin{enumerate}

\item We say that the vector bundle $E$ is 
  \emph{strongly semiample} if $\Sym^k E$ is a globally generated vector bundle on $X$, for some $k>0$.
  
\item We say that the vector bundle $E$ is 
  \emph{semiample} or \emph{weakly semiample}  if 
  $\oo_{\PP(E)}(1)^{\otimes k} = \oo_{\PP(E)}(k)$ 
  is a globally generated line bundle on $\PP(E)$, 
  for some $k>0$.
\item  We say that the vector bundle $E$ 
  is \emph{Asymptotically Generically 
  Generated} or \emph{AGG} if there exixsts an open dense subset 
  $U \subseteq X$ such that 
  the map $ev_x \colon H^0(X, \Sym^k )E \to \Sym^k E (x)$
  is surjective for some $k>0$ and for all $x \in U$.

\end{enumerate}

\end{definition}

\begin{remark}

If $L$ is a line bundle on a compact complex variety $X$,
then the Iitaka-Kodaira dimension $k(X,L)$ of $L$ 
is the growth rate of the dimension 
of holomorphic sections $H^0(X, L^{\otimes k})$. 
In particular $k(X, L)=0$ if and only if 
$h^0(X, L^{\otimes k}) \leqslant 1$ for all $k>0$ and it 
is equal to $1$ for some $k>0$.
\end{remark}

The main lemmas we will use do follow 
Fujiwaras constructions in \cite{fujiwara}.

\begin{lemma}
\label{split}

Let $E$ be a holomorphic vector bundle over a compact complex manifold $X$.
Suppose that $E$ admits a morphism $h \colon E \to L$ to a line bundle $L$
such that the induced map
$S^m h \colon \Sym^m E \to L^{\otimes m}$ is  
surjective and  splitting for some $m>0$.
Then $h \colon E \to L$ is surjective and splitting as well.

\end{lemma}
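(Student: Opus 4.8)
The plan is to establish surjectivity first by a fibrewise argument, and then to deduce the splitting by comparing extension classes. For surjectivity, I would note that for each $x \in X$ the fibre map $S^m h_x \colon \Sym^m E_x \to L_x^{\otimes m}$ sends a product $e_1 \cdots e_m$ to $h_x(e_1)\cdots h_x(e_m)$. Hence if $h_x = 0$ then $S^m h_x = 0$, while if $h_x(e)\neq 0$ for some $e$ then $S^m h_x(e^m) = h_x(e)^m \neq 0$; so $S^m h_x$ is surjective onto the one-dimensional space $L_x^{\otimes m}$ exactly when $h_x$ is nonzero, i.e. surjective onto $L_x$. Since $S^m h$ is surjective by hypothesis, $h$ is surjective on every fibre, hence a surjective morphism of vector bundles whose kernel $K := \ker h$ is a subbundle fitting into
\[
0 \to K \to E \xrightarrow{\,h\,} L \to 0 .
\]
The existence of a splitting of $h$ is governed by the extension class $e \in \mathrm{Ext}^1(L,K) = H^1(X, \Hom(L,K))$ of this sequence, so the goal becomes to prove that $e=0$.

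Next I would pass to symmetric powers. The subbundle $K \subset E$ induces the standard decreasing filtration $\Sym^m E = F^0 \supseteq F^1 \supseteq \cdots \supseteq F^m \supseteq F^{m+1} = 0$, where $F^p$ is the image of $\Sym^p K \otimes \Sym^{m-p} E$, with graded pieces $F^p/F^{p+1} \cong \Sym^p K \otimes L^{\otimes(m-p)}$. In particular $F^1 = \ker(S^m h)$, so the hypothesis that $S^m h$ splits says precisely that the class $\epsilon \in \mathrm{Ext}^1(L^{\otimes m}, F^1)$ of
\[
0 \to F^1 \to \Sym^m E \xrightarrow{\,S^m h\,} L^{\otimes m} \to 0
\]
vanishes.

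I would then push this sequence out along the quotient $F^1 \twoheadrightarrow F^1/F^2 = K \otimes L^{\otimes(m-1)}$. By functoriality of extension classes the resulting class $\bar\epsilon \in \mathrm{Ext}^1(L^{\otimes m}, K \otimes L^{\otimes(m-1)})$ is the image of $\epsilon$, hence $\bar\epsilon = 0$. The key computation is to identify $\bar\epsilon$ with $e$. Choosing local splittings $s_\alpha$ of $h$ over a trivialising cover, the cocycle $c_{\alpha\beta} = s_\alpha - s_\beta \in \Hom(L,K)$ represents $e$; the induced local splittings $\ell^{\otimes m} \mapsto s_\alpha(\ell)^m$ of the symmetric-power sequence satisfy, by the binomial expansion,
\[
s_\alpha(\ell)^m \equiv s_\beta(\ell)^m + m\, s_\beta(\ell)^{m-1}\, c_{\alpha\beta}(\ell) \pmod{F^2},
\]
since every term with at least two $K$-factors lies in $F^2$. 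Reading the surviving first-order term in $F^1/F^2 = K \otimes L^{\otimes(m-1)}$ shows that $\bar\epsilon = m \cdot e$ under the canonical isomorphism $\mathrm{Ext}^1(L^{\otimes m}, K\otimes L^{\otimes(m-1)}) \cong \mathrm{Ext}^1(L,K)$ obtained by tensoring with $L^{\otimes(m-1)}$.

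Finally, since $H^1(X, \Hom(L,K))$ is a $\mathbb{C}$-vector space, multiplication by the nonzero integer $m$ is injective, so $\bar\epsilon = m\,e = 0$ forces $e = 0$; therefore the sequence splits and $h$ admits a section. I expect the main obstacle to be the comparison $\bar\epsilon = m\,e$: one must track the symmetric-power filtration carefully, verify that only the linear (coefficient $m$) term survives modulo $F^2$, and check that the identification of $F^1/F^2$ with $K \otimes L^{\otimes(m-1)}$ is compatible with the \v{C}ech cocycle bookkeeping. The surjectivity step and the final vanishing argument are then routine.
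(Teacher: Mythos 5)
Your proof is correct, but it follows a genuinely different route from the paper's. The paper argues by descending induction on $m$, with no cohomology at all: it factors $S^m h$ as $\alpha \circ \beta$, where $\beta \colon \Sym^m E \to \Sym^{m-1}E \otimes L$ is the polarization map $v_1 \cdots v_m \mapsto \frac{1}{m}\sum_i (v_1 \cdots \check{v_i} \cdots v_m)\otimes h(v_i)$ and $\alpha = (S^{m-1}h)\otimes 1_L$; if $s$ splits $S^m h$, then $\beta \circ s$ splits $\alpha$ (since $\alpha \circ \beta \circ s = \mathrm{id}$), and twisting by $L^{-1}$ shows $S^{m-1}h$ splits, so one descends to $m=1$. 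You instead run the one-shot obstruction-theoretic computation: surjectivity fibrewise (your argument here matches the paper's brief remark and is fine), then the identification $\bar\epsilon = m\,e$ via the filtration $F^\bullet$ on $\Sym^m E$ and the first-order term of the binomial expansion of \v{C}ech splittings, concluding $e=0$ since $m \neq 0$ in $\mathbb{C}$. Your key computation is sound: $F^1 = \ker(S^m h)$, the terms with at least two $K$-factors do lie in $F^2$, and $s_\beta(\ell)^{m-1}c_{\alpha\beta}(\ell)$ maps to $c_{\alpha\beta}(\ell)\otimes \ell^{\otimes(m-1)}$ in $F^1/F^2$, so the cocycle is indeed $m\,c_{\alpha\beta}$ under the twist by $L^{\otimes(m-1)}$. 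Comparing the two: the paper's induction is shorter, purely functorial, and avoids all cocycle bookkeeping; both arguments use characteristic zero in essentially the same place (the paper's $\frac{1}{m}$ in $\beta$ versus your division of $\bar\epsilon = m\,e$ by $m$). Your approach buys strictly more information, though — it shows the extension class of the symmetric-power sequence projects to $m\,e$ on the first graded piece, so a splitting of $S^m h$ merely modulo $F^2$ already forces $h$ to split, and it makes the role of the ground field explicit.
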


\begin{proof}

First, remark that as $S^m h$ is surjective then $h$ must be surjective.
Let us prove that $h$ splits by recursive induction on $m$. 
Suppose $m \geqslant 2$.
Decompose $S^m h$ as 
$\alpha \circ \beta \colon \Sym^m E \to \Sym^{m-1} E \otimes L \to L^{\otimes m}$,
where 
\[
\alpha = (S^{m-1} h) \otimes 1_L \colon \Sym^{m-1}E \otimes L 
\to L^{\otimes m} ~,
\]
and 
$\beta (v_1 \cdot ... \cdot v_m) 
=\frac{1}{m}\sum (v_1 \cdot ... \cdot \check{v_i} \cdot ... \cdot v_m)\otimes h(v_i) \in \Sym^{m-1}E \otimes L $.

As $S^m h$ splits, then $\alpha$ splits,
and then $S^{m-1} h = \alpha \otimes 1_{L^{-1}}$ splits and we can apply 
recursive induction.

\end{proof}

\section{Parallelizable manifolds}

A complex parallelizable manifold is a complex manifold with trivial cotangent bundle.
It is known that a compact complex manifold is parallelizable if and only if 
it is a quotient of a complex Lie group by a discrete subgroup, 
in particular a compact K\'ahler manifold is parallelizable if an only if it is a torus (cf. \cite{wang}).

\begin{lemma}

Let $X$ be a compact complex manifold admitting a finite étale cover which is parallelizable,
then $\pi_1(X)$ is infinite.

\end{lemma}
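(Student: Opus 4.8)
The plan is to reduce the statement to its parallelizable cover and then feed that cover into the structure theorem quoted just above. Since $\tilde{X} \to X$ is a finite étale cover, the induced map on fundamental groups realises $\pi_1(\tilde{X})$ as a finite-index subgroup of $\pi_1(X)$; hence $\pi_1(X)$ is infinite if and only if $\pi_1(\tilde{X})$ is. It therefore suffices to prove that a compact complex parallelizable manifold has infinite fundamental group. Passing to a connected component of $\tilde{X}$ if necessary (a connected component of a finite étale cover of the connected $X$ is again a finite étale cover, and its cotangent bundle is still trivial), I may assume $\tilde{X}$ connected; being a finite cover of the compact $X$, it is also compact. I assume $n = \dim X \geq 1$, the statement being vacuous otherwise.

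By the structure theorem recalled above (\cite{wang}), the connected compact parallelizable manifold $\tilde{X}$ is a quotient $G/\Gamma$ of a connected complex Lie group $G$ by a discrete subgroup $\Gamma$. The translation action of $\Gamma$ on $G$ is free and properly discontinuous, so the projection $q \colon G \to G/\Gamma = \tilde{X}$ is a regular covering with deck transformation group $\Gamma$. This yields a short exact sequence
\[
1 \To \pi_1(G) \To \pi_1(\tilde{X}) \To \Gamma \To 1 .
\]

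First I would dispose of the case where $\Gamma$ is infinite: then $\pi_1(\tilde{X})$ surjects onto the infinite group $\Gamma$ and we are done. The remaining case, $\Gamma$ finite, is the genuine obstacle. Here $q$ exhibits $G$ as a finite cover of the compact $\tilde{X}$, so $G$ is itself compact, and the key input becomes the classical fact that a connected compact complex Lie group is a complex torus: indeed $G$ is abelian, because the matrix coefficients of its adjoint representation are holomorphic functions on the compact connected $G$ and hence constant, and a connected compact abelian complex Lie group has the form $\mathbb{C}^n/\Lambda$. Since $\dim G = \dim \tilde{X} = n \geq 1$, this torus has positive dimension, so $\pi_1(G) \cong \Lambda \cong \mathbb{Z}^{2n}$ is infinite; as $\pi_1(G)$ embeds in $\pi_1(\tilde{X})$ through the exact sequence, we again conclude that $\pi_1(\tilde{X})$, and therefore $\pi_1(X)$, is infinite.

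The heavy lifting is done by the structure theorem, which converts the analytic hypothesis of a trivial cotangent bundle into the group-theoretic description $\tilde{X} = G/\Gamma$; the only delicate point is that a finite $\Gamma$ does not make $\pi_1(\tilde{X})$ finite, which is exactly what the torus fact prevents. A case-free variant is to pass to the universal cover $\tilde{G}$ of $G$: then $\tilde{G} \to \tilde{X}$ is the universal covering, so finiteness of $\pi_1(\tilde{X})$ would force $\tilde{G}$ to be a compact simply connected complex Lie group, hence a point, contradicting $n \geq 1$.
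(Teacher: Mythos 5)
Your proof is correct and takes essentially the same route as the paper: invoke Wang's theorem to write the parallelizable cover as $G/\Gamma$, use the covering exact sequence $1 \to \pi_1(G) \to \pi_1(\tilde{X}) \to \Gamma \to 1$, and split into the cases $\Gamma$ infinite (surjection onto an infinite group) and $\Gamma$ finite ($G$ compact, hence a complex torus with infinite $\pi_1$). The only differences are minor polish: you skip the paper's Galois-closure reduction, which is in fact unnecessary since $\pi_1$ injects under any covering, and you spell out why a compact connected complex Lie group is a torus (constancy of the adjoint representation), a fact the paper simply asserts.
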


\begin{proof}

First remark that by Galois closure any finite \'etale cover $X^{\prime} \to X$ admits a cover 
$X^{\prime \prime} \to X^{\prime}$ such that $X^{\prime \prime} \to X$ is a finite \'etale Galois cover,
furthermore if $X^{\prime}$ is parallelizable then $X^{\prime \prime}$ is parallelizable as well.
So we can suppose that the cover $X^{\prime} \to X$ is a  finite \'etale Galois, and $X^{\prime}$ parallelizable.
Therefore $\pi_1(X^{\prime}) \subseteq \pi_1(X)$ and we need to show that a compact parallelizable manifold 
has infinite fundamental group.

Now suppose that $X^{\prime} = G / \Gamma$ with $G$ a complex Lie group and $\Gamma$  a discrete subgroup.
If $\Gamma$ is finite, then $G$ is compact as well, therefore $G$ is a complex torus and has 
an infinite fundamental group, and $G \to X^{\prime}$ is a finite \'etale covering, so 
$\pi_1(X) \supseteq \pi_1(G)$ is infinite. 
On the other hand if $\Gamma$ is infinite,
then the covering $G \to X^{\prime}$ yields a group extension
$1 \to \pi_1(G) \to \pi_1(X) \to \Gamma \to 1$ therefore $\pi_1(X)$ is infinite.

\end{proof}

\begin{theorem}
\label{trivial}

Let $X$ be a compact complex manifold,
let $E$ be a holomorphic vector bundle on $X$. 
Suppose that $E$ is strongly semiample,
and that its determinant has Iitaka-Kodaira dimension $k(X, \det E)=0$. 
Then there exists a finite Galois cover $f \colon X^{\prime} \to X$ such that 
$f^* E$ is trivial.

\end{theorem}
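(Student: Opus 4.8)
The plan is to prove the theorem in two steps: first I would upgrade the strong semiampleness of $E$ to genuine triviality of the single bundle $\Sym^k E$, and then extract from this a line bundle $N$ with $N^{\otimes k}\cong \oo_X$ such that $E\cong N^{\oplus \rk E}$, whose $k$-torsion produces the desired cyclic Galois cover.

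For the first step, fix $k>0$ with $\Sym^k E$ globally generated and set $W:=H^0(X,\Sym^k E)$, $r:=\rk E$ and $q:=\rk \Sym^k E=\binom{k+r-1}{r-1}$. The surjection $W\otimes \oo_X\twoheadrightarrow \Sym^k E$ defines a classifying morphism $g\colon X\to \mathrm{Gr}(q,W)$ to the Grassmannian of rank-$q$ quotients of $W$, with $g^{*}\mathcal{Q}\cong \Sym^k E$ for the universal quotient $\mathcal{Q}$, so that $g$ followed by the Plücker embedding pulls $\oo(1)$ back to $\det \Sym^k E=(\det E)^{\otimes c}$ for a suitable $c>0$. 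Since Iitaka--Kodaira dimension is insensitive to positive powers, $k(X,\det \Sym^k E)=k(X,\det E)=0$; as $\det \Sym^k E$ is a quotient of $\Lambda^{q}W\otimes \oo_X$ it is globally generated, and a globally generated line bundle of Iitaka dimension $0$ has a unique, nowhere-vanishing section, hence is trivial. Therefore the Plücker image of $g(X)$ is a point, $g$ is constant, and $\Sym^k E\cong g^{*}\mathcal{Q}$ is the trivial bundle $\oo_X^{\oplus q}$.

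The second step is the geometric core and follows Fujiwara's construction. Triviality of $\Sym^k E$ makes $\oo_{\PP(E)}(k)$ globally generated with $\pi_{*}\oo_{\PP(E)}(k)=\Sym^k E=\oo_X^{\oplus q}$, and I would study the induced morphism $\phi\colon \PP(E)\to \PP(W^{*})=\PP^{q-1}$. Because the evaluation $W\to \Sym^k E(x)$ is an isomorphism for every $x$, the restriction of $\phi$ to each fibre $\PP(E_x)\cong \PP^{r-1}$ is the complete $k$-th Veronese embedding, so its image $Z_x$ is a Veronese variety of dimension $r-1$. On the other hand $Y:=\phi(\PP(E))$ is irreducible of dimension $k(\PP(E),\oo_{\PP(E)}(1))=r-1$, this being the growth rate of $h^0(X,\Sym^{mk}E)$, where each $\Sym^{mk}E$ is again trivial by the first step applied to the globally generated bundle $\Sym^{mk}E$ (a quotient of $\Sym^{m}(\Sym^k E)=\Sym^m(\oo_X^{\oplus q})$). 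Hence $Z_x=Y$ for all $x$, each $\phi|_{\PP(E_x)}$ is an isomorphism onto $Y$, and $(\pi,\phi)\colon \PP(E)\to X\times Y$ is a proper holomorphic bijection between manifolds, thus a biholomorphism. So $\PP(E)\cong X\times \PP^{r-1}$ and $E\cong N^{\oplus r}$ for some $N\in \pic(X)$.

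Finally, $\Sym^k(N^{\oplus r})=(N^{\otimes k})^{\oplus q}$, and comparing with $\Sym^k E\cong \oo_X^{\oplus q}$ forces $N^{\otimes k}\cong \oo_X$. Choosing the exact order $d\mid k$ of $N$ and an isomorphism $N^{\otimes d}\cong \oo_X$, the relative spectrum of $\bigoplus_{i=0}^{d-1}N^{-i}$ gives a connected finite étale $\zz/d$-Galois cover $f\colon X'\to X$ with $f^{*}N\cong \oo_{X'}$, whence $f^{*}E\cong \oo_{X'}^{\oplus r}$ is trivial, as required. The main obstacle I anticipate is the second step: carrying out the Iitaka-type fibration argument and the identification $\PP(E)\cong X\times \PP^{r-1}$ in the merely complex-analytic (non-projective) category, where I must justify that $\phi$ is a morphism onto an analytic subvariety, that the fibrewise Veronese images $Z_x$ are globally constant, and that the bijective map $(\pi,\phi)$ is biholomorphic rather than merely bijective.
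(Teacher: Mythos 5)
Your proof is correct, and its first half agrees in substance with the paper's: the paper also first trivializes $\det \Sym^m E$ (by wedging sections that give a basis at a point, rather than via your Grassmannian classifying map, but the mechanism --- globally generated plus Iitaka--Kodaira dimension zero forces triviality --- is identical), concludes $\Sym^m E \cong \oo_X^{\oplus N}$, and then studies the morphism $\Phi$ on $\PP(E)$ whose restriction to each fibre is a complete Veronese embedding. You diverge in two places, both to your advantage. First, the paper merely asserts that the fibrewise images $\Phi(\PP(E(x)))$ are ``fixed'' as $x$ varies; you actually prove it, by observing that every $\Sym^{tm}E$ is again trivial, so $h^0(\PP(E),\oo_{\PP(E)}(tm))$ grows like $t^{r-1}$, forcing the image $Y$ of $\Phi$ to have dimension $r-1$ and hence to coincide with each irreducible $(r-1)$-dimensional $Z_x$ --- this fills a real gap in the paper's exposition, and the analytic-category worries you flag are handled by standard tools (Remmert's proper mapping theorem and Chow's theorem make $Y\subseteq\PP^{q-1}$ projective, Stein factorization shows section growth computes $\dim Y$, and a bijective holomorphic map between complex manifolds is biholomorphic). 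Second, the endgames differ: the paper takes a single fibre $W=\Phi^{-1}(w)$ as a section of $\pi$, obtains a quotient $E\to L$ with $L^{\otimes m}\cong\oo_W$, splits $E\cong F\oplus L$ using Lemma \ref{split} on splittings of symmetric powers, and then recurses through a tower of cyclic covers, invoking Galois closure at the very end; you instead deduce $E\cong N^{\oplus r}$ in one stroke from the bundle triviality $\PP(E)\cong X\times\PP^{r-1}$ (via the standard comparison of the two relative $\oo(1)$'s, which differ by $\pi^*N$), and finish with a single cyclic --- hence automatically Galois --- cover trivializing $N$. Your route avoids both the splitting lemma and the recursion, and directly yields the sharper structure $E\cong N^{\oplus\rk E}$ with $N$ torsion, which the paper only records a posteriori in Remark \ref{splits}.
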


\begin{proof}

First remark that $\det E$  is a torsion line bundle.
In fact we recall that a line bundle that  has Iitaka-Kodaira dmension $0$
is trivial if globally generated, as it cannot have more than 1-dimensional space of global sections.

As some symmetric power $S^m E$ is globally generated,
for any point $x \in X$ we find sections $\sigma_1, \dots , \sigma_N \in H^0(X, S^m E)$
linearly independent and providing a basis for the fiber $S^m E (x)$, with $N = \rk S^m E$, 
therefore we obtain a section $\sigma_1 \wedge ... \wedge \sigma_N \in H^0(X, (\det E)^{\otimes M})$
of the line bundle $\det (S^m E) = (\det E)^{\otimes M}$
which does not vanish on $x \in X$.
So $(\det E)^{\otimes M}$ is globally generated and of Kodaira dimension $0$,  and therefore  it is trivial.

Then also the symmetric power $S^m E$ is a trivial vector bundle,
in fact if the sections $\sigma_1, \dots , \sigma_N \in H^0(X, S^m E)$ chosen above 
give a basis of $S^m E (x)$  on a point $x \in X$, as the determinant is trivial,
then  they provide a basis at all points $y \in X$, so the induced  map
$\oo_X^{\oplus N} \to S^m E$ is an isomorphism.

Now consider  $\pi \colon \PP(E) \to X$,
 the projective bundle of 1-dimensional quotients
of $E$, and its {tautological} quotient
$\pi^* E \twoheadrightarrow \oo_{\PP(E) (1)}$.
As $S^m E$ is globally generated, then so is $\pi^* S^m E$ and  $\oo_{\PP (E)}(m)$.
Therefore $\oo_{\pp(E)}(m)$ induces a map $\Phi \colon \pp(E) \to \PP^{N-1}$.
Let us show that this map induces many sections of $\pi$.

Let $x \in X$ be a point, and consider the diagram:

\[
\mathop{
\begin{array}{ccccc}
\pp(E(x)) & \stackrel{}{\hookrightarrow}&  \pp(E) & \stackrel{\Phi}{\to} & \pp^N\\
\lmapdown{} & { } & \rmapdown{\pi}\\
\{ x \} & \stackrel{}{\hookrightarrow} & X
\end{array}
}
\]
Now $\Phi_{|\pp(E(x))} \colon \PP(E(x)) \to \pp^N$ is induced by the linear system on 
$\PP(E(x))$ given by the image of the restriction map
\[
H^0(\pp(E), \oo_{\pp(E)}(m)) \to H^0(\pp(E(x)), \oo_{\pp(E(x))}(m)) ~.
\]

But as $S^mE $ is trivial this map is an isomorphism: in fact we have natural isomorphisms
\[
H^0(\pp(E), \oo_{\pp(E)}(m)) \cong H^0(X, S^m E) \cong S^m E(x) ~, \textrm{ and }
\]
\[
H^0(\pp(E(x)), \oo_{\pp(E(x))}(m)) \cong S^m H^0(\pp(E(x)), \oo(1)) \cong S^m E(x) ~.
\]

Therefore the map $\Phi$, when restricted to the projective space
$\PP(E(x))$, is a Veronese embedding, in particular it is injective, 
and letting $x\in X$ vary the map $\Phi_{|\pp(E(x))} \colon \PP(E(x)) \to \pp^N$ has a fixed image.
Therefore for every poiont $w \in \PP^N$, a fiber $\Phi^{-1}(w)$ meets 
a fiber $\pi^{-1}(x) = \PP(E(x))$ exactly in one point,
and actually $\pi$ is trivial as projective bundle.
Therefore  $W = \Phi^{-1}(w)$ provides a section of $\pi \colon \PP(E) \to X$,
with $\pi$ inducing an isomorphism $W$

Such a section yields a quotient $E \to L$, where $L = \oo_{\PP(E)}(1)_{|W}$,
and so $L^{\otimes m} \cong \oo_W$.

By Lemma \ref{split} the vector bundle $E$ splits as $F \oplus L$,
and considering the cyclic \'etale covering $h \colon W^{\prime} \to W$ induced by 
$L^{\otimes m} \cong \oo_W$, then we obtain
on $W^{\prime}$ a splitting $h^* E = h^* F \oplus h^* L = h^* F \oplus \oo_{W^{\prime}}$.
Repeating recursively the argument for the vector bundle $h^* F$ on $W^{\prime}$,
we obtain a finite covering where $E$ becomes trivial.
Then by Galois closure we obtain a finite Galois covering where $E$ becomes trivial.

\end{proof}

\section{Fundamental groups}

From Theorem \ref{trivial} we obtain the first part of Theorem \ref{main}:

\begin{corollary}
\label{corparall}

Let $X$ be a compact complex manifold of Kodaira dimension $k(X) = 0$ 
such that $\Omega^1_X$ is a strongly semiample vector bundle.
Then $X$ admits a finite \'etale Galois covering which is parallelizable.
In particular $\pi_1(X)$ is infinite.

\end{corollary}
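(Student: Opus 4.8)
The plan is to apply Theorem \ref{trivial} to the cotangent bundle $E = \Omega^1_X$, and then combine its conclusion with the parallelizability lemma established above. The two hypotheses of Theorem \ref{trivial} must be verified. First, $\Omega^1_X$ is strongly semiample by assumption. Second, the determinant $\det \Omega^1_X$ is precisely the canonical bundle of $X$, whose Iitaka-Kodaira dimension is by definition the Kodaira dimension $k(X) = 0$; hence $k(X, \det \Omega^1_X) = 0$, exactly as required.

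With both hypotheses in place, Theorem \ref{trivial} produces a finite Galois cover $f \colon X' \to X$ such that $f^* \Omega^1_X$ is trivial. The essential point to extract from its construction is that this cover is \'etale: it is assembled from the cyclic coverings attached to the torsion line bundles appearing in the proof (those satisfying $L^{\otimes m} \cong \oo_W$), together with a Galois closure, and each such step is unramified, as is their composite. Since $f$ is \'etale, the pullback of differentials yields a canonical isomorphism $\Omega^1_{X'} \cong f^* \Omega^1_X$, which is therefore trivial. Consequently $X'$ is a compact complex parallelizable manifold.

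It then remains to invoke the lemma proved earlier: $X$ admits the finite \'etale cover $f \colon X' \to X$ with $X'$ parallelizable, so that lemma applies directly and gives that $\pi_1(X)$ is infinite. The one delicate point — and the only real obstacle — is confirming that the cover furnished by Theorem \ref{trivial} is genuinely \'etale, since both the identification $\Omega^1_{X'} \cong f^* \Omega^1_X$ and the application of the parallelizability lemma rely on it. This is settled by inspecting the covers built in the proof of Theorem \ref{trivial}, each of which is unramified by construction, and concludes the argument.
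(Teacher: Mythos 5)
Your proposal is correct and follows exactly the paper's route: the paper's proof is the one-line application of Theorem \ref{trivial} to $\Omega^1_X$, combined with the parallelizability lemma. You supply somewhat more detail than the paper does --- notably the verification that the cover from Theorem \ref{trivial} is \'etale (since it is built from cyclic covers attached to torsion line bundles and a Galois closure), a point the paper leaves implicit even though the corollary's statement requires it --- and this is a sound and welcome completion of the same argument.
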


\begin{proof}

We just need to apply Theorem \ref{trivial} to the cotangent bundle $\Omega_X$.

\end{proof}

In the compact K\"ahler case we can prove the second part of Theorem \ref{main}.
The proof is actually very similar to the projective case,
which is treated in \cite{mistrurbi} and does generalise easily in this case:

\begin{theorem}
\label{tori}

Let $X$ be a compact K\"ahler manifold of Kodaira dimension $k(X) = 0$ 
such that $\Omega^1_X$ is a strongly semiample vector bundle.
Then $X$ is biholomorphic to a complex torus.

\end{theorem}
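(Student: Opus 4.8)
The plan is to pass to a finite cover on which the cotangent bundle trivialises, identify that cover as a torus, and then prove that the covering group acts by translations. First I would invoke Corollary~\ref{corparall}: since $k(X)=0$ the determinant $\det\Omega^1_X=K_X$ has Iitaka--Kodaira dimension $0$, so Theorem~\ref{trivial} applies and produces a finite \'etale Galois cover $f\colon X'\to X$, with group $G$, that is parallelizable, i.e.\ $\Omega^1_{X'}=f^*\Omega^1_X$ is trivial. Because $f$ is \'etale and $X$ is K\"ahler, the cover $X'$ is again compact K\"ahler; being complex parallelizable, it is then a complex torus by Wang's theorem \cite{wang}. Write $X'=\CC^n/\Lambda$ and set $V:=H^0(X',\Omega^1_{X'})$, the $n$-dimensional space of translation-invariant $1$-forms.

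Next I would extract the arithmetic content of \emph{strong} semiampleness. Every $g\in G$ acts on the torus $X'$ by a holomorphic automorphism, hence by an affine map $z\mapsto A_gz+b_g$, and the linear parts $A_g$ give a representation of $G$ on $V$ and thus on $\Sym^kV$. Since $f$ is Galois \'etale and $\Sym^k\Omega^1_{X'}$ is trivial with space of sections $\Sym^kV$, one has $H^0(X,\Sym^k\Omega^1_X)\cong(\Sym^kV)^G$. Choosing $k$ with $\Sym^k\Omega^1_X$ globally generated, evaluation at a point $x\in X$ lifted to $x'\in X'$ identifies the fibre with $\Sym^kV$ and sends an invariant section to its constant value; global generation then forces $(\Sym^kV)^G=\Sym^kV$, that is, $G$ acts trivially on $\Sym^kV$.

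Then I would run an elementary eigenvalue computation. Each $A_g$ has finite order, hence is diagonalizable with root-of-unity eigenvalues $\lambda_1,\dots,\lambda_n$; triviality on $\Sym^kV$ gives $\lambda_i^k=1$ and $\lambda_i^{k-1}\lambda_j=1$ for all $i,j$, which forces all $\lambda_i$ to coincide, so $A_g=\lambda_g\,\mathrm{Id}$ is scalar. Finally I would use freeness of the $G$-action (the cover is \'etale) to exclude $\lambda_g\neq1$: for such a $g$ the map $z\mapsto\lambda_gz+b_g$ has a fixed point on $X'$, since $(\lambda_g-1)\mathrm{Id}$ is invertible, contradicting freeness. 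Hence every non-identity element of $G$ acts by a pure translation, $G$ is a finite group of translations, and $X=X'/G=\CC^n/\Gamma$ for a lattice $\Gamma\supseteq\Lambda$; thus $X$ is biholomorphic to a complex torus.

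The step I expect to be the main obstacle is the implication from ``$\Sym^k\Omega^1_X$ globally generated'' to ``$G$ acts by translations'': one must see that it is \emph{strong} semiampleness (rather than the weaker notion) that forces the covering group to act trivially on $\Sym^kV$, and then that freeness upgrades this to a translation action. This is exactly the dividing line between conclusion~(2), a genuine torus, and conclusion~(3), only an \'etale quotient of an abelian variety: the bielliptic-type examples $z\mapsto\zeta z$ show that a nontrivial scalar linear part survives weak semiampleness while destroying global generation of the symmetric powers, so the strong hypothesis is essential here.
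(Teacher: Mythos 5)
Your proposal is correct and follows essentially the same route as the paper: pass to the finite \'etale Galois cover from Theorem~\ref{trivial}, identify it as a torus via Wang's theorem, use global generation of $\Sym^k\Omega^1_X$ to force $G$ to act trivially on $\Sym^k H^0(T,\Omega^1_T)$, and conclude that $G$ acts by translations. The only difference is that you spell out the steps the paper delegates to \cite{mistrettaav} --- the eigenvalue argument showing each linear part is a homothety, and the fixed-point argument using invertibility of $(\lambda_g-1)\mathrm{Id}$ to upgrade freeness to triviality of the linear action --- and both of these details are accurate.
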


\begin{proof}

Let us apply  Theorem \ref{trivial} to the cotangent bundle $\Omega_X$ 
and obtain $\gamma \colon X^{\prime} \to X$ which is an \'etale Galois cover.
Now $X^{\prime}$ is a compact K\"ahler parallelizable manifold,
so it is a torus $T$, and carries a finite group action such that
$\gamma \colon T \to T/G = X$. Now as the covering is \'etale
$\gamma^* \Omega^1_X = \Omega^1_T$ and $\gamma^* S^m \Omega^1_X = S^m \Omega^1_T$.
Therefore 
\[
\gamma^* H^0(X, S^m \Omega^1_X) = H^0(T, S^m \Omega^1_T)^G \subseteq  H^0(T, S^m \Omega^1_T) = S^m H^0(T, \Omega^1_T) ~.
\]

As $S^m \Omega^1_X$ is globally generated, then 
\[
\dim H^0(X, S^m \Omega^1_X) \geqslant \rk S^m \Omega^1_X = \rk S^m \Omega^1_T = \dim S^m H^0(T, \Omega^1_T) ~,
\]
and this implies that $G$ acts trivially on $ S^m H^0(T, \Omega^1_T)$,
and it can be shown that the action of $G$ on $H^0(T, \Omega^1_T)$ must be then through homotheties,
and eventually that it must be trivial on $H^0(T, \Omega^1_T)$ otherwise the action of $G$ on $T$ could not be free
(cf. \cite{mistrettaav}).
Therefore, as $G$ acts trivially on $H^0(T, \Omega^1_T)$, it acts on $T$ by translations,
so the quotient is a torus.

\end{proof}

The third and fourth points in Theorem \ref{main} are consequence  respectively of
the work of Fujiwara  \cite{fujiwara} 
and a recent theorem by Andreas H\"oring \cite{hoering}:

\begin{theorem}[H\"oring]
\label{hoering}
Let $X$ be a projective manifold with strongly semiample cotangent bundle,
i.e. for some positive integer $m \in \NN$ the symmetric product $S^m \Omega^1_X$ is globally generated.
Then there exists a finite cover $X^{\prime} \to X$
such that $X^{\prime} \cong Y \times A$,
where $Y$ has ample canonical bundle and $A$ is an abelian variety.

\end{theorem}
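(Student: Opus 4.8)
The plan is to run through the standard positivity reductions and then realise the decomposition as the combination of an Iitaka fibration with general-type base and an isotrivial abelian fibration. First I would record that global generation of $S^m\Omega^1_X$ makes $\Omega^1_X$ a nef vector bundle, whence $K_X=\det\Omega^1_X$ is nef; moreover $\det(S^m\Omega^1_X)=K_X^{\otimes M}$ is globally generated, so $K_X$ is in fact semiample. Abundance therefore holds automatically, the Iitaka dimension coincides with $k(X)$, and for $M$ sufficiently divisible the morphism attached to $|K_X^{\otimes M}|$ is the Iitaka fibration $f\colon X\to Y$ onto a normal projective variety with $\dim Y=k(X)$. Arranging $Y$ smooth and, as the canonical model of a general-type quotient, with ample canonical bundle, produces the candidate factor $Y$ of the statement.

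Next I would analyse the fibres. A general fibre $F$ of $f$ satisfies $K_F\equiv 0$, and $\Omega^1_F$ is again strongly semiample, since the restriction of a globally generated bundle is globally generated. As $\det\Omega^1_F=K_F$ has Iitaka--Kodaira dimension $0$, Theorem~\ref{trivial} applies to $\Omega^1_F$: a finite Galois cover of $F$ has trivial cotangent bundle, so $F$ is, up to isogeny, an abelian variety $A$. Carrying this out in family, the relative cotangent sheaf $\Omega^1_{X/Y}$ is locally free and trivial on each fibre, which forces the exact sequence
\[
0\longrightarrow f^*\Omega^1_Y \longrightarrow \Omega^1_X \longrightarrow \Omega^1_{X/Y}\longrightarrow 0
\]
to be locally split along fibres and identifies $\Omega^1_{X/Y}$ with $f^*f_*\Omega^1_{X/Y}$; the relative Albanese of $f$ is then an abelian scheme $\mathcal{A}\to Y$ of which $X$ is a torsor over the smooth locus.

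The heart of the argument, and the step I expect to be the main obstacle, is to upgrade this fibration into an honest product after a finite cover. Two things must be shown: that the abelian fibration is \emph{isotrivial}, and that the torsor is split. For isotriviality I would use the nefness of $\Omega^1_X$ to constrain the variation of the family, arguing that the Kodaira--Spencer class of $\mathcal{A}\to Y$ must vanish: a nonzero variation of a polarised family with $K\equiv 0$ produces, via the positivity of the Hodge-theoretic direct images, a sub- or quotient sheaf of $\Omega^1_X$ along the base directions incompatible with $\Omega^1_X$ being nef. This reduces the family to $\mathcal{A}\cong Y'\times A$ after a finite \'etale base change $Y'\to Y$. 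Splitting the torsor is then a finite-order obstruction, killed by a further \'etale cover, after which $f$ acquires a section and, using that the splitting of the cotangent sequence is preserved, one obtains $X'\cong Y'\times A$. Since $Y'\to Y$ is finite and $Y$ has ample canonical bundle, $Y'$ has ample canonical bundle as well, completing the decomposition.

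I would emphasise that nefness of $\Omega^1_X$ is used twice in an essential way: once to force semiampleness and abundance and the general-type structure on the base, and once, more delicately, to kill the variation of the abelian fibration. It is precisely this rigidity of families under cotangent positivity that is the crux, and the careful execution of this step is exactly what Fujiwara's constructions in \cite{fujiwara} are designed to supply.
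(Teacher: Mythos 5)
You should first be aware that the paper itself does \emph{not} prove this statement: it is quoted verbatim as a theorem of H\"oring with a citation to \cite{hoering}, and is used as a black box to deduce the fourth part of Theorem \ref{main}. So there is no internal proof to match, and your sketch has to stand on its own as a reconstruction of H\"oring's argument --- and as such it has genuine gaps at exactly the points you flag as delicate. First, the ampleness of $K_Y$ does not follow from your reduction: the Iitaka fibration attached to $|K_X^{\otimes M}|$ maps onto the canonical model, on which the \emph{ample} line bundle is the $\mathcal{O}(1)$ of the pluricanonical $\mathrm{Proj}$ (a pushforward of a pluricanonical bundle of $X$), not the canonical bundle of the base; ``as the canonical model of a general-type quotient, with ample canonical bundle'' is not a valid inference, and in the theorem the ampleness of $K_Y$ on the factor is part of what must be proved after passing to the cover, possibly modifying the base. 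Second, you silently assume $f$ is smooth: saying $\Omega^1_{X/Y}$ is ``locally free and trivial on each fibre'' presupposes there are no singular fibres, and ruling those out is itself a nontrivial consequence of the positivity of $\Omega^1_X$, not a formality.

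The most serious gap is the isotriviality step, which you correctly identify as the crux but do not actually carry out. The mechanism you propose --- a nonzero variation produces ``a sub- or quotient sheaf of $\Omega^1_X$ along the base directions incompatible with $\Omega^1_X$ being nef'' --- does not work as stated: quotients of nef bundles are automatically nef, so exhibiting a quotient contradicts nothing, and subsheaves of nef bundles can be arbitrarily negative, so exhibiting a subsheaf contradicts nothing either. Killing the variation requires genuine Hodge-theoretic input (positivity of direct images $f_*\omega_{X/Y}^{\otimes m}$, Arakelov/Viehweg--Zuo type bounds, or a moduli-map argument), and as written this step is a placeholder rather than a proof; the subsequent splitting of the torsor by a further \'etale cover likewise needs an argument that the obstruction class is torsion. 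Finally, your closing attribution is off: the paper takes this theorem from \cite{hoering}, while Fujiwara's constructions \cite{fujiwara} enter elsewhere in the paper (Lemma \ref{split} and Theorem \ref{fuji}); they are not designed to supply the isotriviality step. On the positive side, your use of Theorem \ref{trivial} on a general fibre to get abelian fibres is a nice fit with the paper's own toolkit, and the overall skeleton (nefness, semiampleness of $K_X$, Iitaka fibration, abelian fibres, isotriviality, torsor splitting) is the right shape --- but with the two hardest steps unproven, this is an outline, not a proof.
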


Now, Theorem \ref{hoering} is stated only for projective varieties $X$ 
with strongly semiample cotangent bundle $\Omega^1_X$,
however the result stil holds for varieties with weakly semiample cotangent bundle,
provided that one shows that the canonical bundle $\omega_X = \det \Omega^1_X$ is semiample in this case.
This holds in general for weakly semiample vector bundle on projective varieties,
and is the object of the following theorem, which is proved in \cite{fujiwara}:

\begin{theorem}[Fujiwara]
\label{fuji}

Let $X$ be a projective variety, and let $E$ be a weakly semiample vector bundle on $X$.
Then the determinant $\det E$ is a semiample line bundle.

\end{theorem}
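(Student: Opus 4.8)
The statement is equivalent to the assertion that $\oo_{\PP(E)}(1)$ is a semiample line bundle on $\PP(E)$: by definition $E$ is weakly semiample precisely when some power $\oo_{\PP(E)}(k)$ is globally generated on $\PP(E)$. Set $r=\rk E$. The plan is to transport this semiampleness from $\PP(E)$ to a statement about $\det E$ by means of an $r$-fold relative product together with the natural determinant morphism, and then to descend semiampleness back down to $X$.

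First I would form the $r$-fold fibre product $Y=\PP(E)\times_X\cdots\times_X\PP(E)$, with structure map $p\colon Y\to X$ and $i$-th projection $q_i\colon Y\to\PP(E)$, and set $L_i=q_i^*\oo_{\PP(E)}(1)$ and $M=L_1\otimes\cdots\otimes L_r$. Each $L_i$ is the pullback of a semiample line bundle, hence semiample, and a tensor product of semiample line bundles is semiample, so $M$ is semiample on $Y$. The $r$ tautological quotients assemble into a morphism $p^*E\to\bigoplus_{i=1}^r L_i$ of bundles of rank $r$; taking determinants yields a nonzero morphism of line bundles $p^*\det E\to M$, which is an isomorphism exactly over the dense open set $U\subseteq Y$ where the $r$ quotients are linearly independent. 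Writing $D$ for its vanishing divisor, we obtain $M\cong p^*\det E\otimes\oo_Y(D)$ with $D$ effective and supported on $Y\setminus U$; in particular $\det E$ is nef, since $M$ is nef and $p$ is surjective.

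To conclude I would restrict to a subvariety $Y_0\subseteq U$ that is finite and surjective over $X$ via $p$. On such a $Y_0$ the divisor $D$ does not meet $Y_0$, so $(p|_{Y_0})^*\det E\cong M|_{Y_0}$, which is semiample as the restriction of a semiample line bundle. Semiampleness descends along finite surjective morphisms — nefness descends because every curve downstairs is the image of a curve upstairs, and the semiample fibration of $M|_{Y_0}$ can be pushed down by the rigidity of such fibrations — and this would give semiampleness of $\det E$ on $X$.

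The main obstacle is the construction of $Y_0$: I need a complete multisection of $p$ contained in the linear-independence locus $U$. This is delicate precisely because, over each point $x\in X$, the fibre $Y_x\cong(\PP^{r-1})^{r}$ meets $D$ in a divisor lying in the ample class $M|_{Y_x}$, so $U\cap Y_x$ is affine and carries no complete positive-dimensional subvarieties; a naive multisection obtained as the closure of a closed point of the generic fibre $U_\eta$ will in general acquire components inside $D$ over the bad locus, and subtracting the resulting effective divisor destroys semiampleness. Overcoming this — producing a finite cover of $X$ over which the $r$ tautological quotients stay everywhere independent (equivalently, spreading the generic point of $U_\eta$ out to a genuine finite multisection disjoint from $D$, after a further base change or modification of $X$), together with the justification of finite-surjective descent of semiampleness — is where the real work lies, and is the content I would extract from Fujiwara's argument.
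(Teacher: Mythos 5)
The gap you flag at the end is not a technical loose end but the failure point of the whole strategy, and it cannot be repaired in the form you propose. A complete multisection $Y_0\subseteq U$, finite and surjective over $X$, would make the fibrewise map $p^*E\to L_1\oplus\cdots\oplus L_r$ an isomorphism along $Y_0$ (it is a map of rank-$r$ bundles that is bijective on every fibre over $U$), so it would exhibit a finite cover of $X$ on which $E$ splits as a direct sum of line bundles. That conclusion is vastly stronger than semiampleness of $\det E$, and there is no reason an arbitrary weakly semiample bundle should satisfy it: splitting after finite pullback is a very restrictive property (already for stable bundles on curves, where it corresponds to the monodromy representation becoming abelian on a finite-index subgroup), while ample stable bundles are weakly semiample. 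Your own observation that $U\cap Y_x$ is the complement of an ample divisor, hence affine, is precisely the geometric reason no such complete $Y_0$ can be expected; allowing ``a further base change or modification'' does not help, since the splitting conclusion would survive these operations over a dense open set. Your construction does correctly yield nefness of $\det E$, and your descent step is fine (semiampleness does descend along finite surjective morphisms in characteristic zero, by a norm argument), but the bridge between the two is missing and is not, in fact, the content of Fujiwara's argument.

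What the paper actually does is pointwise and intersection-theoretic, avoiding any single cover that must work for all $x\in X$ at once. With $\xi=c_1(\oo_{\PP(E)}(1))$ and $r=\rk E$, the Grothendieck relation gives $\pi_*\xi^{r-1}=1$ and $\pi_*\xi^r=c_1(E)$. Fix $x\in X$: since the morphism $\Phi$ defined by $\oo_{\PP(E)}(m)$ restricts to a \emph{finite} map on the fibre $\PP(E(x))$, $r$ generic sections $\sigma_1,\dots,\sigma_r\in H^0(\PP(E),\oo_{\PP(E)}(m))$ satisfy $V(\sigma_1)\cap\dots\cap V(\sigma_r)\cap\PP(E(x))=\emptyset$. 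The intersection is an effective cycle of class $m^r\xi^r$, whose pushforward is an effective divisor of class $m^rc_1(E)$, i.e.\ a section of $(\det E)^{\otimes m^r}$, nonvanishing at $x$; letting $x$ vary gives global generation of $(\det E)^{\otimes m^r}$. In your language: instead of seeking one multisection disjoint from a \emph{fixed} degeneracy divisor $D$ everywhere (impossible), one lets the divisors move with $x$ and only needs emptiness of an $r$-fold intersection over the single fibre $\PP(E(x))$, which genericity provides. That pointwise use of $\pi_*\xi^r=c_1(E)$ is the idea your proposal is missing.
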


The proof is contained in the work of Fujiwara \cite{fujiwara},
however, we give a detailed proof here,
ad it is related to Theorem \ref{trivial}:

\begin{proof}

Let $\pi \colon \PP(E) \to X$ be the projectivisation of the vector bundle $E$,
and $\oo_{\PP(E)}(1)$ the tautological bundle. 
Fix $x \in X$, we have the fiber $\pi^{-1} (x) = \pp(E(x))$, and 
the the restriction $\oo_{\PP(E)}(1)_{|\pi^{-1} (x)}$ 
is the usual very ample line bundle $\oo_{\PP(E(x))}(1)$.
Let $\xi \in \mathrm{Pic}(\PP(E)) = CH^1 (\PP(E))$, and $r = \rk E$.
Now the Chern classes in the Chow ring of $X$ are determined by the relation:
\[
\sum_{i=0}^r (-1)^i \pi^* c_i(E) \xi^{r-i} =0 ~.
\]
For dimensional reasons we have $\pi_* \xi^k =0$ if $k \leqslant r-2$,
and  $\pi_* \xi^{r-1} = 1 \in CH^{n} (X)$ as $\xi$ restricts to 
$\oo(1)$ on the fibers of $\pi$.
Therefore the formula above gives $\pi_* \xi^r = c_1(E) \in CH^1 (X)$.

Now suppose that $\oo_{\pp(E)}(m)$ is globally generated,
and consider $\Phi \colon \pp(E) \to \PP(H^0(\pp(E), \oo(m))) = \PP^N$.
Notice that, 
as 
$\Phi_{|\pp(E(x))} \colon \pp(E(x)) \to \PP^N$ is induced by the base point free linear system 
\[
\mathrm{Im} (H^0(\PP(E), \oo_{\PP(E)}(m)) \to H^0(\PP(E(x)), \oo_{\PP(E(x))}(m))) ~,
\]
it is a finite map, 
so $N \geqslant r-1$ and therefore we can choose $r$ generic sections 
$\sigma_1, ... , \sigma_r \in H^0(\PP(E), \oo_{\PP(E)}(m))$,
such that $V(\sigma_1) \cap \dots \cap V(\sigma_r) \cap \pp(E(x)) = \emptyset$.
Now the intersection $V(\sigma_1) \cap \dots \cap V(\sigma_r)$ is an effective cycle
whose class is $m^r \xi^r \in CH^r (\pp(E))$.
Therefore the line bundle $(\det E)^{\otimes m^r} = \pi_* (m^r \xi^r)$
has a section which is non zero out of
$\pi (V(\sigma_1) \cap \dots \cap V(\sigma_r) )$,
in particular it does not vanish on $x \in X$.

\end{proof}

In particular the third point of Theorem \ref{main} follows directly from Fujiwara's
characterization in \cite{fujiwara}, 
and the fourth point follows applying H\"oring's theorem above:
suppose $X$ is a smooth projective variety with
$k(X) <n$ and $\Omega^1_X$ weakly semiample.
Then it admits an \'etale finite cover $X^{\prime} \to X$ 
with $X^{\prime} \cong A \times Y$, the variety $Y$ having ample canonical bundle and $A$ an abelian 
variety of dimension $n - k(X)$, therefore 
$\pi(X) \supseteq \pi(A) \times \pi(Y)$ and it is infinite.

\section{Examples, questions, and remarks}

\begin{example}

Given a surjcetive morphism $f \colon Y^{\prime} \to Y$ and a 
vector bundle $E$ on $Y$, then $E$ is weakly semiample if and only if 
$f^* E$ is weakly semiample (cf. \cite{fujita}). However the same cannot be 
said for strongly semiample bundles, even in the case that $f$ is finite \'etale 
and Galois: for example
consider a non-trivial $2$-torsion line bundle $L$ on a curve $C$,
therefore $L$ determines an \'etale $\ZZ / 2\ZZ$-cover 
$f \colon C^{\prime} \to C$ such that $f^* L  = \oo_{C^{\prime}}$. 
Therefore the vector bundle $E = \oo_C \oplus L$ is weakly semiample on $C$ 
but not strongly semiample, as any symmetric power $S^m E$ 
contains a copy of $L$ as direct factor,
and cannot be globally generated.
The vector bundle $f^* E$ being trivial,
it is strongly semiample.

\end{example}

\begin{example} 

According to Theorem \ref{tori} the only
compact K\"ahler manifolds $X$ with strongly semiample 
cotangent bundle and Kodaira dimensio $k(X) =0$ compact tori.

Therefore any \'etale finite quotient of a torus has 
kodaira dimension 0 and weakly semiample cotangent bundle,
if this quotient is not again a torus, it gives an example
of a variety with weakly semiample cotangent bundle but not strongly semiample.

Such an example is any bielliptic surface, which is covered by an abelian surface. 

\end{example}

\begin{remark}
\label{splits}

In the proof of Theorem \ref{trivial} we see that a vector bundle $E$ o a compact complex variety $X$ such that $S^m E$ is trivial splits as a direct sum $F \oplus L$
with $L$ an $m$-torsion line bundle.  As $E = (F \otimes L^{-1} \oplus \oo)\otimes L$
then $S^m E = S^m (F \otimes L^{-1} \oplus \oo)$
has a direct factor  $F \otimes  L^{-1}$ which is trivial as well. Therefore 
$E = L^{\oplus \rk E}$ is a direct sum of the same torsion line bundle.

\end{remark}

\begin{question}

Is there a compact complex manifold $X$ of Kodaira dimension $k(X) =0$
 with cotangent bundle $\Omega^1_X$ which is
not trivial but strongly semiample?
According to Corollary \ref{corparall} and Remark \ref{splits} 
it must be a cyclic quotient of a parallelizable compact manifold,
and cannot be K\"ahler. Futhermore its tangent bundle shoud decompose as a direct sum 
of  isomorphic torsion line bundles. 

\end{question}

\begin{question}

Let $X$ be a compact complex variety, and let $E$ be a \emph{weakly} semiample vector bundle on $X$. 
Is $\det (E)$ a semiample line bundle? And if $X$ is compact K\"ahler?

\end{question}

The techniques used to prove Theorem \ref{fuji} cannot be (directly)
used for the compact K\"ahler case,
nevertheless, in order to apply H\"oring's result 
to a compact K\"ahler manifold in Kodaira
dimension $0$ we would just need that the cotangent bundle be numerically trivial.
We leave these questions to futher investigations.

\begin{question}
\label{numtriv}

Let $X$ be a compact complex variety with a fixed hermitian metric,
and let $E$ be a \emph{weakly} semiample vector bundle on $X$. 
If the Iitaka-Kodaira dimension of the determinant is 
${k}(X, \det (E))=0$,
is $E$ a \emph{numerically trivial} vector bundle? Numerically trivial means
that both $E$ and its dual $E^*$ are nef. 
And what if $X$ is compact K\"ahler?

\end{question}

\begin{remark}

A positive answer to Question \ref{numtriv} above,
would allow to apply H\"oring's theorem to the case of complex tori,
in order to generalize Fujiwara results and
Theorem \ref{main} as follows:
let $X$ be a compact K\"ahler variety
with Kodaira dimension $k(X)=0$,
then the
 cotangent bundle is weakly semiample
if and only if  $X$ is an \'etale quotient of a complex torus by the action of a finite group.

\end{remark}

 \bibliographystyle{amsalpha22}
 \bibliography{parall}

\end{document}